\newcommand{\define}[1]{\textbf{\boldmath{#1}}}
\title{A Lattice-Theoretic Perspective on the Persistence Map}
\authorrunning{Brendan Mallery, Ad\'{e}lie Garin, Justin Curry}
\author{Brendan Mallery\footnote{Corresponding Author}}{Tufts University, Medford, Massachusetts, USA}{brendan.mallery@tufts.edu}{
%orcid
}{}
\author{Ad\'{e}lie Garin}{\'{E}cole Polytechnique Fédérale de Lausanne (EPFL), Lausanne, Switzerland}{adelie.garin@epfl.ch}{https://orcid.org/0000-0002-3223-6320}{SNSF, CRSII5 177237}
\author{Justin Curry}{University at Albany, State University of New York, USA }{jmcurry@albany.edu}{https://orcid.org/
0000-0003-2504-8388}{Supported by NSF CCF-1850052 and NASA 80GRC020C0016}
\keywords{inverse problems, lattices, persistent homology, merge trees, barcodes, persistence map}%TODO mandatory; please add comma-separated list of keywords
\begin{document}

\maketitle

%\begin{nolinenumbers}
\begin{abstract}
We provide a naturally isomorphic description of the persistence map from merge trees to barcodes \cite{trees_barcodesII} in terms of a monotone map from the partition lattice to the subset lattice. Our description is local, which offers the potential to speed up inverse computations, and brings classical tools in combinatorics to bear on an active area of research in topological data analysis (TDA).
\end{abstract}

\section{Background on the Inverse Problem}
Merge trees 
%are convenient data structures for summarizing the topology of scalar fields and 
play a central role in topological data analysis (TDA).
One can apply persistent homology to a merge tree to obtain an ``adjacency free'' description of a merge tree in terms of its barcode, we call this association of a barcode to a merge tree the \emph{persistence map.}
Characterizing precisely how many merge trees map to the same barcode was studied in \cite{curry2017fiber,TRN,trees_barcodesII} and has yielded significant connections to geometric group theory, combinatorics, and statistics. 
%For example, the cardinality of the fiber of this mapping, known as the \emph{tree realization number} in \cite{TRN}, has been used as a summary statistics to differentiate neuron types.
Understanding the fiber of the persistence map is crucial for understanding how noise in data propagates to noise in persistent homology.

In \cite{TRN,trees_barcodesII} a combinatorial version of this inverse problem was considered; see Figure \ref{fig:comb_trees_barcodes}.
A \emph{combinatorial merge tree} is a binary, rooted, combinatorial tree with birth-ordered labels on the leaves $\{0,1,\ldots,n\}$ and death-ordered labels on the internal nodes. 
%We denote the set of combinatorial merge trees with $n$ leaves by $\mathcal T_n$. 
Every barcode with $n$ finite-length bars whose left (birth) endpoints are distinct and whose right (death) endpoints are distinct can be encoded by a \emph{combinatorial barcode} $B=\{(i,j)\}$ if the $i^{th}$ birth endpoint is matched with the $j^{th}$ death endpoint.
Equivalently, a combinatorial barcode is the graph of a permutation $\sigma$ of $\{1,\ldots,n\}$.
%It was shown in \cite{trees_barcodesII} that these combinatorial notions completely determine the fiber of the Elder rule, see the Appendix.

%it can be associated a permutation $\sigma_B$, that we called \emph{combinatorial barcode}, in the symmetric group $ S_n$ by considering the order of the deaths with respect to the order of the births \cite{TRN,trees_barcodesII}. 
%If we order the births increasingly such that $b_{i_1} < \ldots< b_{i_n}$, the order of the deaths $d_{j_1}<\ldots < d_{j_n}$ gives rise to a permutation $\sigma_B = [j_1,...,j_n] \in S_n$. This \emph{combinatorial barcode} tracks the ordering of the death values with respect to the birth values. 

In this abstract, we characterize the persistence map from combinatorial merge trees to combinatorial barcodes in terms of monotone maps between two lattices: the subset lattice and the partition lattice. We show that a maximal chain in the subset and partition lattices corresponds to a combinatorial barcode and combinatorial merge tree respectively, and that one may incrementally construct solutions to the inverse problem using this correspondence.
%Studying the fiber of the persistence map using lattices could, for instance, help to understand the space of merge trees and build statistical tools to study trees.

\begin{figure}
    \centering
    \includegraphics[width= 0.9 \textwidth]{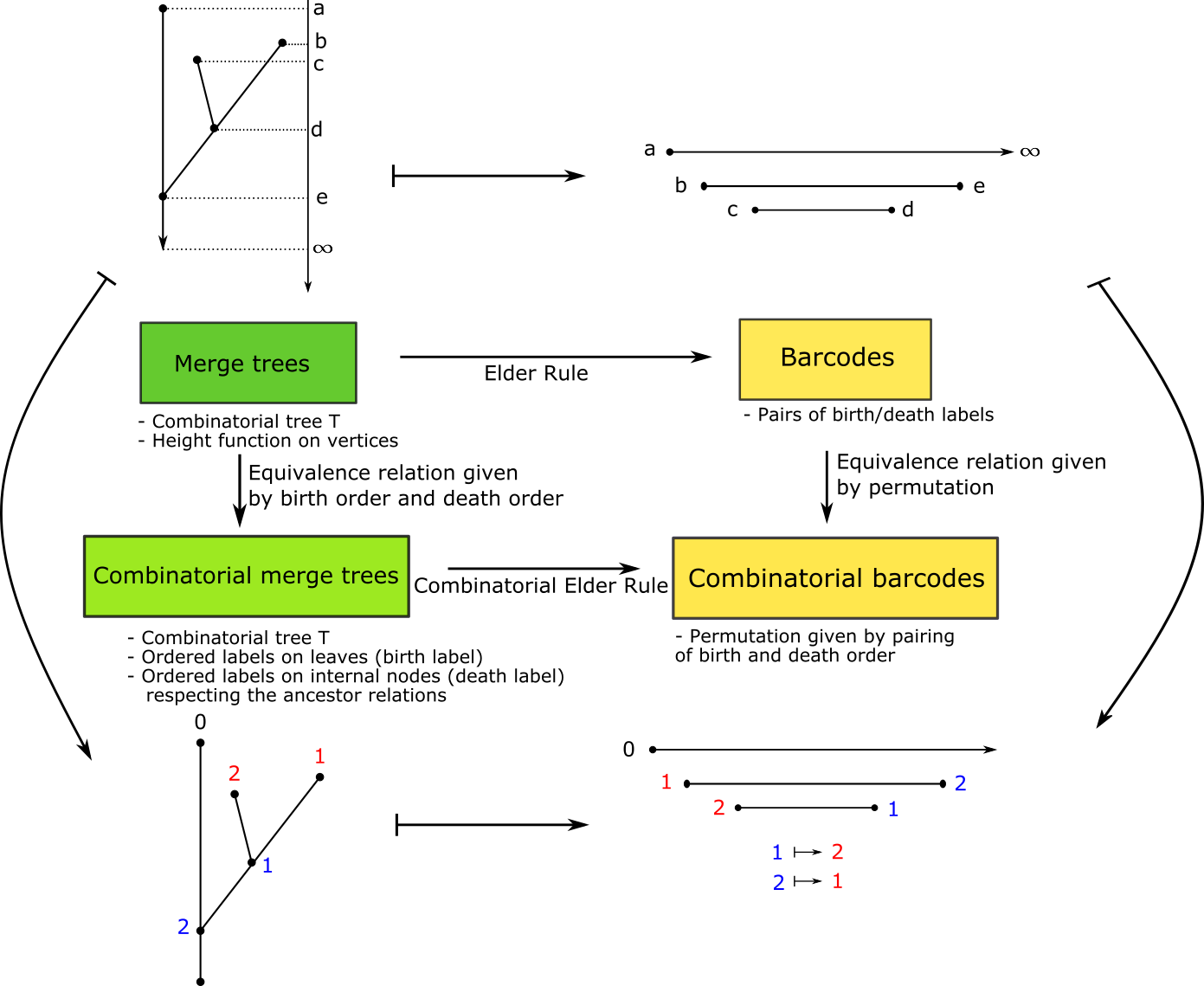}
    \caption{Figure from \cite{trees_barcodesII}, expressing the combinatorial inverse problem.}
    \label{fig:comb_trees_barcodes}
\end{figure}

\section{A Lattice Version of the Inverse Problem}

Let $(P, \preccurlyeq)$ be a poset.
Recall that a \emph{lattice} is a poset equipped with meets and joins.
%\begin{enumerate}
%\item (Reflexivity) $p \leq p$,
%\item (Antisymetry) if $p \leq q$ and $q \leq p$ then $p =q$
%\item (Transitivity) if $p \leq q$ and $q \leq r$, then $p \leq r$. 
%\end{enumerate}
%Given posets $(P, \leq_P)$ and $(Q,\leq_Q)$, a map $f:P \longrightarrow Q$ is \emph{order preserving} if $p\leq_P q$ implies $f(p) \leq_Q f(q)$ for all $p,q \in P$, and we write $f: (P, \leq_P) \longrightarrow (Q,\leq_Q)$.
%A \emph{totally ordered set} is a poset $P$ such that for each $p,q \in P$, either $p \leq q$ or $q \leq p$. 
A totally ordered subset $\mathcal{C} \subseteq P$ is called a \emph{chain}. 
%A chain is \emph{maximal} if it is not a proper subset of any other chain in $P$. 
An \emph{interval} is a subset $\mathcal{I}\subseteq P$ where if $p,q\in \mathcal{I}$ and $p\preccurlyeq r \preccurlyeq q$, then $r\in \mathcal{I}$.
A \emph{path} $\gamma$ is a chain that is also an interval. A path is \emph{based} at $x_0\in P$ if the lowest element in $\gamma$ is $x_0$. If $P$ has a unique lowest element $\hat{0}$ (e.g. a lattice), we write $\Tilde{P}$ as the poset of paths based at $\hat{0}$, which is a poset via containment of paths. There is a unique surjective map $\pi_P: \Tilde{P}\rightarrow P$ sending a path to its endpoint. Furthermore, if $f:P\rightarrow Q$ is a monotone map of posets, there is a unique map $\Tilde{f}:\Tilde{P}\rightarrow \Tilde{Q}$ such that $f\circ \pi_P=\pi_Q\circ \Tilde{f}$. We call $\Tilde{f}$ the \emph{lift} of $f$.
%, written $\tilde{P}_{x_0}$. 
%A point $p \in P$ is said to be \emph{covered} by $q \in P$, denoted by $p \preccurlyeq q$, if $p \leq q$ and there is no $v \in P$ such that $p \leq v \leq q$. 
%The partial order in $P$ is completely determined by the covering relation: the order $\leq$ is the smallest reflexive and transitive relation that contains $\preccurlyeq$. 
%In particular, this can be used to define the \emph{Hasse diagram} of a poset $(P, \leq)$. 
%The \emph{Hasse diagram} $H$ of $(P, \leq)$  is a directed graph where the vertices are the elements of $P$ and there is an edge from $p$ to $q$ if $p \preccurlyeq q$ and there is no $r\neq p,q$ with $p$. 
%The theory of lattices is rich and complex. We will not need the abstract definition of lattice here but the reader interested in this topic can read \cite{lattices}. 
%In this abstract, we think of a lattice as a poset that has a least upper bound and a greatest lower bound.
%For $Q \subseteq P$, point $u \in P$ is called an \emph{upper bound} (respectively \emph{lower bound}) of $Q$ if $q \leq u$ ($u \leq q$) for all $q \in Q$. It is a \emph{least upper bound} (\emph{greatest lower bound}) if $u \leq w$ ($w \leq u$) for all other upper (lower) bounds $w \in Q$. 

%We are mainly interested in two specific lattices that we introduce now. 

\begin{definition}[Subset Lattice]\label{ex_subset_lattice}
Let $[n]= \{1,\ldots, n\}$ and consider $P= \mathcal P ([n])$, the set of all subsets of $[n]$, including the empty set $\emptyset$, equipped with the partial order $ \subseteq$ of ``being a subset of''. This forms the \define{subset lattice} $\Pi_n$ of $[n]$, with $A\cap B$ and $A\cup B$ being the meet and join of $A,B\in\Pi_n$, respectively.
The poset of paths in $\Pi_n$ based at $\emptyset$ is $\tilde{\Pi}_n$.
%Figure \ref{fig_lattices}A and B show the Hasse diagram of the subset lattices $\Pi_3$ and $\Pi_2$ of $\{1,2,3\}$ and $\{1,2\}$ respectively.
\end{definition}

\begin{definition}[Partition Lattice]
A \emph{partition} of the set $\mathbf{n}:=\{0,1,\ldots,n\}$
is a collection of disjoint subsets $\mathcal{U}=\{U_1,\ldots, U_k\}$ of $\mathbf{n}$ whose union is $\mathbf{n}$.
A partition $\mathcal{U}$ \emph{refines} a partition $\mathcal{U}'$, written $\mathcal{U}\preceq \mathcal{U}'$, if every subset of $\mathcal{U}'$ is equal to a union of elements of $\mathcal{U}$. 
%Said differently, $\mathcal{U}\preceq \mathcal{U}'$ if for each $U_i\in \mathcal{U}$ there exists $U_j'\in\mathcal{U}'$ such that  $U_i\subseteq U_j'$.
We denote the \define{lattice of partitions} of $\mathbf{n}$ by $\mathcal{P}_n$.
%The refinement relation endows the set $\mathcal{P}_n$ with a partial order, which also happens to be a lattice.
The poset of paths based at 
%the finest partition of $\mathbf{n}$ 
$\{\{0\},\ldots, \{n\}\}$
is $\tilde{\mathcal{P}}_n.$
\end{definition}

%\begin{figure}
%    \centering
 %   \includegraphics[width= \textwidth]{Lattices/Figures/lattices.png}
  %  \caption{Examples of Hasse diagrams for the partition and subset lattices.}
   % \label{fig_lattices}
%\end{figure}

%(\textbf{We should replace galois connection with monotone, or continuous (see https://math.stackexchange.com/questions/1114218/example-of-a-monotone-non-continuous-map for continuous map between complete lattices, which we do have here. But seems not as standard as simply monotone maps). In particular we don't have a Galois connection here. We could also just call them all functors and then it comes for free.}

%For any $x_0\in P$, the poset of based paths $\tilde{P}_{x_0}$ consists of paths starting $x_0$, ordered by containment. We write $\tilde{\mathcal{P}_n}$ to be the poset of based paths in $\mathcal{P}_n$ starting at the empty partition, and $\tilde{\Pi_n}$ to be the poset of based paths in $\Pi_n$ starting at $\emptyset$.

We can filter a combinatorial barcode $B$ with $n$ bars into sets $B_1\subset \cdots\subset B_n:=B$ where $B_k$ is the set of pairs $\{(i,j)\}_{j\leq k}$. We refer to $B_k$ as a \textit{partial (combinatorial) barcode}. The set of all partial barcodes with at most $n$ bars forms a poset by containment, which we denote by $\mathcal{PCB}_n$. Similarly, a \textit{partial (combinatorial) merge tree} is a filtration of a combinatorial merge tree $T$ with $n+1$ leaves by subgraphs $T_0\subset T_1\subset\cdots\subset T_n:= T$ where $T_k$ is the full subgraph supported on the set of leaf nodes and all internal nodes with label less than or equal to $k$. Partial merge trees also forms a poset by subgraph containment, denoted $\mathcal{PCT}_n$; see Figure \ref{lattice_summary}. The persistence map between combinatorial merge trees and barcodes extends to a map from $\mathcal{PCT}_n$ to $\mathcal{PCB}_n$, which we also call the persistence map.

\begin{theorem}\label{theorem_lattice}
The poset of partial merge trees $\mathcal{PCT}_n$ and barcodes $\mathcal{PCB}_n$ are isomorphic to $\tilde{\mathcal{P}}_n$ and $\Tilde{\Pi}_n$, respectively. Furthermore, there is a monotone map $H: \mathcal{P}_n \to \Pi_n$ whose lift $\Tilde{H}: \Tilde{\mathcal{P}}_n\rightarrow \Tilde{\Pi}_n$ is naturally isomorphic to the persistence map from $\mathcal{PCT}_n\rightarrow \mathcal{PCB}_n$.
\end{theorem}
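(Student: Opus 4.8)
The plan is to prove the three assertions in turn: the two poset isomorphisms, the monotonicity of a concrete map $H$, and the identification of its lift with the persistence map. First I would unwind what paths look like in each lattice. A covering relation in $\mathcal{P}_n$ merges exactly two blocks, and a covering relation in $\Pi_n$ adjoins exactly one element; hence a path (a saturated chain) based at $\hat 0$ in $\mathcal{P}_n$ is precisely an ordered sequence of pairwise block-merges, and a path based at $\emptyset$ in $\Pi_n$ is precisely an injective word $b_1,\dots,b_k$ over $[n]$. For $\mathcal{PCT}_n \cong \tilde{\mathcal{P}}_n$, I send a partial merge tree $T_k$ to the chain $\mathcal{U}_0\prec\cdots\prec\mathcal{U}_k$, where $\mathcal{U}_m$ is the partition of the leaves induced by the connected components of $T_m$; the death-labels on the internal nodes record exactly the order of merges, so this is a well-defined saturated chain, and conversely each saturated chain reconstructs a unique labeled binary forest that extends to a full merge tree. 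For $\mathcal{PCB}_n \cong \tilde{\Pi}_n$, I send $B_k$ to the chain $\emptyset\subset\{b_1\}\subset\cdots\subset\{b_1,\dots,b_k\}$, where $b_m$ is the birth paired with death $m$. In both cases containment of subgraphs (resp.\ subsets) corresponds to the prefix order on paths, so both assignments are order isomorphisms, matching base points since $T_0\leftrightarrow\hat 0$ and $B_0=\emptyset\leftrightarrow\emptyset$.

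Next I would define $H\colon \mathcal{P}_n\to\Pi_n$ by $H(\mathcal{U}) = [n]\setminus\{\min U : U\in\mathcal{U}\}$, the set of elements that are not the minimum of their own block (note $0$ is always a block minimum, so the image lies in $\Pi_n$ and $H(\hat 0)=\emptyset$). Monotonicity is then immediate: if $\mathcal{U}\preceq\mathcal{U}'$, each block of $\mathcal{U}$ sits inside a block of $\mathcal{U}'$, so any element that is non-minimal in its $\mathcal{U}$-block remains non-minimal in its $\mathcal{U}'$-block, whence $H(\mathcal{U})\subseteq H(\mathcal{U}')$.

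The crux is the elder-rule computation linking $\tilde H$ to persistence. I would show that a covering relation $\mathcal{U}\lessdot\mathcal{U}'$ merging blocks $A,B$ satisfies $H(\mathcal{U}')=H(\mathcal{U})\cup\{\max(\min A,\min B)\}$, a single new element of $[n]$. Consequently $\tilde H$ carries saturated chains to saturated chains, so it genuinely lands in $\tilde{\Pi}_n$, and the one element it adjoins at each step is the minimum of the younger component, i.e.\ exactly the birth of the bar killed at that merge under the elder rule. Reading off the whole chain reproduces the injective word $b_1,\dots,b_k$ of births-in-death-order, which is the partial barcode. Thus the square formed by the two isomorphisms, $\tilde H$, and the persistence map commutes, and naturality follows formally from the uniqueness of lifts established in the excerpt.

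I expect the main obstacle to be the elder-rule lemma underpinning the last step: proving that the deceased births recorded at a partition $\mathcal{U}$ are exactly its non-minimal elements, \emph{independently} of the order in which the merges were performed, and that every covering step contributes exactly one death. Packaging this as the pointwise behavior of $H$ is precisely what forces the lift to land in $\tilde{\Pi}_n$ and to coincide with the persistence map. By comparison, the two encoding bijections (well-definedness and the extension of a saturated chain to a full tree) and the monotonicity of $H$ are routine once the covering relations are described, and the naturality is a formal consequence.
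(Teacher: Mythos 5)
Your proposal is correct and follows essentially the same route as the paper: the same component-induced and death-order isomorphisms, the identical map $H$ (the paper writes it as $\bigcup_i U_i\setminus\{\min U_i\}$, which equals your complement of block minima), the same monotonicity argument, and the same appeal to the Elder Rule to identify $\tilde{H}$ with the persistence map. Your covering-relation lemma, that merging blocks $A,B$ adjoins exactly $\max(\min A,\min B)$, is simply a sharper, more local statement of what the paper asserts when it says the Elder Rule leaves each $\min U_i$ unpaired and that the barcode is read off from successive differences.
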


\begin{proof} 
Every partial merge tree $T_0\subset \cdots\subset T_k$ defines a path $\mathcal{U}_0<\cdots <\mathcal{U}_k$, where $\mathcal{U}_i$ is the partition of the leaf node labels induced by connected components in the graph $T_i$; see Figure \ref{lattice_summary}. 
Every partial barcode $B_1\subset\cdots\subset B_k$ defines a path $\emptyset:=A_0\subset \cdots\subset A_k$, where $A_k$ is the set of birth labels whose deaths occur by time $k$. These specify the isomorphisms.

Define $H:\mathcal{P}_n\rightarrow \Pi_n$ as follows: Let $(U_1,U_2,...,U_k)$ be a partition of $\mathbf{n}$. For each $U_i$, let $U_i':=U_i\setminus \{\min\{x\in U_i\}\}$. Let $H((U_1,U_2,...,U_k))=\cup_{i\in [k]} U_i'\in \Pi_n$. This map is monotone, since if $(U_1,U_2,...,U_k)\leq (V_1,V_2,...,V_l)$, then the latter partition is obtained by collapsing parts of the first, which can only add elements to $H((U_1,U_2,...,U_k))$. It is easy to see that this map is also surjective. This lifts to a natural map $\Tilde{H}$, defined on paths. 

The maximal element (endpoint) of a path $\gamma\in \Tilde{\mathcal{P}}_n$ corresponds to a partition $(U_1,U_2,...,U_k)$ that indexes the leaf labels of the connected components of $T_k$, the $k^{th}$ stage in a partial merge tree.  
%The image $T_1,...,T_k$ under the persistence map can be computed by sending each $T_i$ to a (sub) partial barcode $B_i$. 
%By the Elder Rule, $B_i$ contains a bar for each leaf in $T_i$ except the eldest leaf, i.e. the one with lowest index. 
The Elder Rule \cite{curry2017fiber} of persistent homology maps each of the $U_i$ to $U_i'$ as $\min U_i$ encodes the oldest leaf node, which goes unpaired by the persistence algorithm.
The image is the union $B=\cup_{i\in [k]} B_i$ of leaf node labels that have been killed by stage $k$. 
%The same computation may then be performed for each element in $\gamma$. This produces a path in $\mathcal{B}_n$ which can be mapped via the index map to a path in $\Pi_n$. It is easy to check that this is map is equal to $\Tilde{H}$. 
The combinatorial barcode is encoded by the successive differences between $B_i$ and $B_{i+1}$.
\end{proof} 

\section{Future Work} 
%The lattice of maximal chains of the subset lattice is the $1$-skeleton of the permutohedron \cite{permutohedron} (which is also the Cayley graph of the symmetric group). 

%The representation of a barcode by a permutation not only gives a formula for the tree realization number, but also opens the door to deeper connections between inverse problems in topological data analysis, group theory, and combinatorics. Based on the combinatorial classes of barcodes, the authors of \cite{barcode_coxeter} construct a stratification of the barcode space. Leveraging the combinatorial description of merge trees to obtain a stratification of the space of merge trees with $n$ leaves would be a big step forward in understanding merge tree space and doing statistics on merge trees. 

%There are also questions of purely combinatorial interest...

%The partial barcode perspective treats combinatorial barcodes as maximal chains in the subset lattice, which are known to be in bijection with the symmetric group $S_n$.
Theorem \ref{theorem_lattice} is still in need of a full geometric description that accounts for actual positions and lengths of bars in a barcode and edges in a merge tree.
In \cite{barcode_coxeter} a novel coordinatization of barcode space was given based on the relation with the symmetric group.
However, a similar picture for merge tree space that uses the connection with the partition lattice is unknown.
Additionally, the lattice structure on these ``skeletonizations'' of barcode and merge tree space has not been fully explored.
As noted in \cite{gulen2022diagrams,mccleary2020edit,patel2018generalized}, M\"{o}bius inversion provides another way of summarizing topological changes in a filtration, which suggests that inverse problems, lattice theory, and M\"{o}bius inversion may occupy a rich intersection of ideas.
%One could consider the possibility of using convolution operations on lattices to define novel operations on these objects in TDA.

% By defining an order on the maximal chains of the subset lattice, one obtains a new lattice which corresponds to the Bruhat order lattice (CITE), i.e., the $1$-skeleton of the permutohedron \cite{permutohedron}. The dual of the permutohedron, the Coxeter complex of $S_n$, has been used in \cite{barcode_coxeter} to build a stratification of the set of barcodes with $n$ bars where the highest dimensional strata are indexed by the permutations in the symmetric group. 

%  However, there is no known way of building an order on maximal chains in the partition lattice that would give the $1$-skeleton of a polytope (CITE?).  We hope that this work, by giving new combinatorial insights on the complex relationship between merge trees and barcodes, will help to to understand the geometry of the space of merge trees. THE ABOVE IS GOOD, BUT I'D LIKE TO SEE MORE MOTIVATION FOR LATTICES.

\bibliography{biblio}

\begin{figure}
    \centering
    \includegraphics[width= \textwidth]{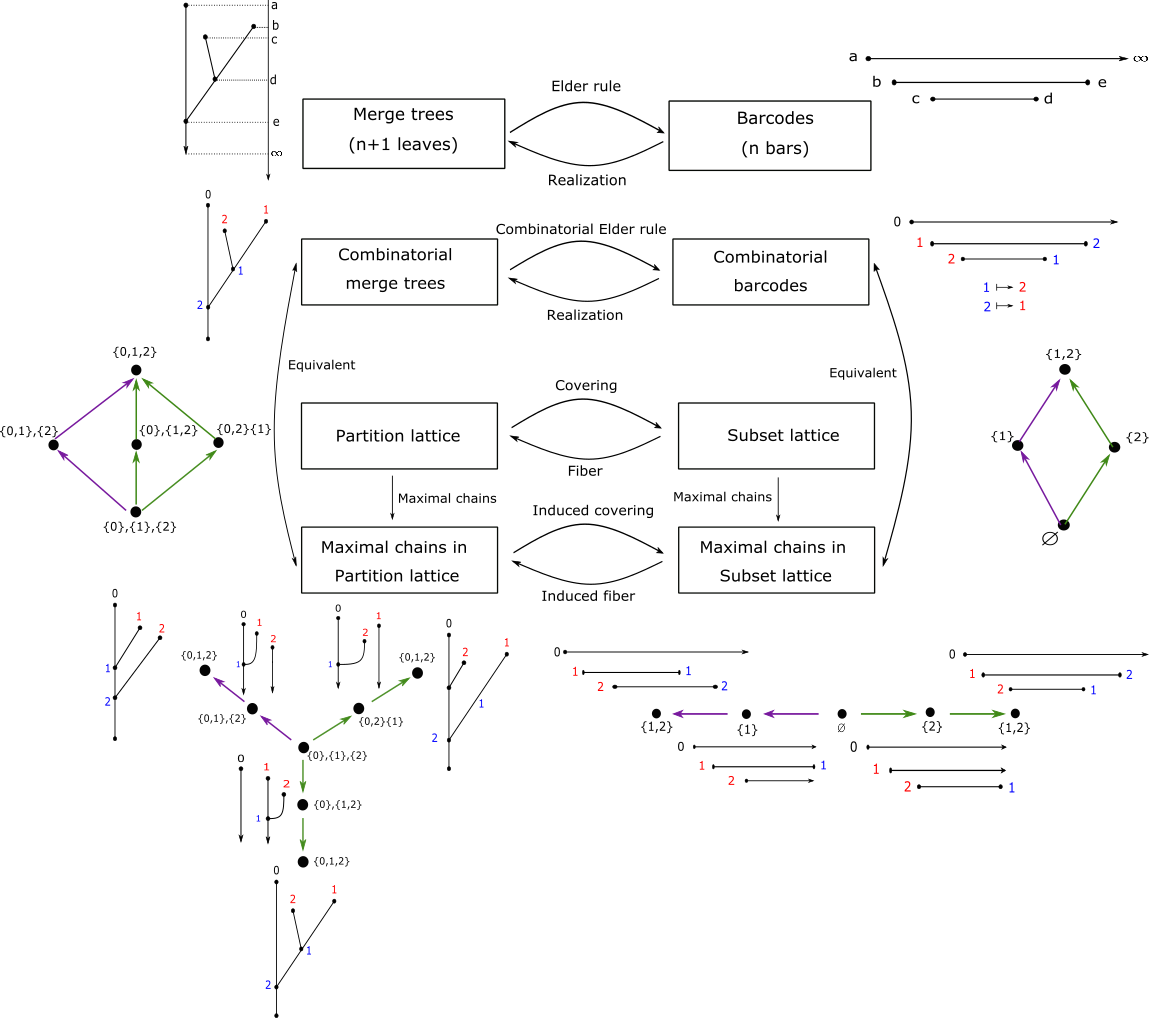}
    \caption{Illustration of Theorem \ref{theorem_lattice}.}
    \label{lattice_summary}
\end{figure}

\end{document}